\newtheorem{theorem}{Theorem}[section]
\newtheorem{lemma}[theorem]{Lemma}
\newtheorem{conjecture}[theorem]{Conjecture}
\newtheorem{problem}[theorem]{Problem}
\theoremstyle{definition}
\newtheorem{example}[theorem]{Example}
\newtheorem{remark}[theorem]{Remark}
\begin{document}

\title[Degree estimate for commutators]
{Degree estimate for commutators}

\author[Vesselin Drensky and Jie-Tai Yu]
{Vesselin Drensky and Jie-Tai Yu}
\address{Institute of Mathematics and Informatics,
Bulgarian Academy of Sciences,
1113 Sofia, Bulgaria}
\email{drensky@math.bas.bg}
\address{Department of Mathematics,
The University of Hong Kong,
Hong Kong SAR, China}
\email{yujt@hkucc.hku.hk}

\thanks
{The research of Vesselin Drensky was partially supported by Grant
MI-1503/2005 of the Bulgarian National Science Fund.}

\thanks
{The research of Jie-Tai Yu was partially supported by an RGC-CERG grant.}

\subjclass[2000]
{16S10; 16W20; 16Z05.}

\begin{abstract} 
Let $K\langle X\rangle$ be a free associative algebra over a field $K$
of characteristic 0 and let each of the noncommuting polynomials 
$f,g\in K\langle X\rangle$ generate its centralizer. 
Assume that the leading homogeneous components  of $f$ and $g$
are algebraically dependent with degrees which do not divide each other. 
We give a counterexample to the recent conjecture of Jie-Tai Yu that 
\[
\text{\rm deg}([f,g])=\text{\rm deg}(fg-gf)
>\min\{\text{\rm deg}(f),\text{\rm deg}(g)\}.
\]
Our example satisfies 
\[
\frac{1}{2}\text{\rm deg}(g)<\text{\rm deg}([f,g])
<\text{\rm deg}(g)<\text{\rm deg}(f)
\]
and  $\deg([f,g])$ can be made as close to $\text{deg}(g)/2$ as we want.
We obtain also a counterexample to another related conjecture 
of Makar-Limanov and Jie-Tai Yu
stated in terms of Malcev --  Neumann formal power series. These counterexamples
are found using the description of the free algebra $K\langle X\rangle$
considered as a bimodule of $K[u]$ where $u$ is a monomial which is not
a power of another monomial and then solving the equation
$[u^m,s]=[u^n,r]$ with unknowns
$r,s\in K\langle X\rangle$.
\end{abstract}

\maketitle

\section*{Introduction}

Let $K$ be a field of characteristic zero and let $X=\{x_1,\ldots,x_d\}$ be
a finite set of variables. Let $K[X]$ and $K\langle X\rangle$ be, respectively, 
the polynomial algebra and the free associative $K$-algebra generated by 
$X$. If $f,g$ are two polynomials in 
$K[X]$ or $K\langle X\rangle$, we want to estimate the minimal degree 
of the elements of the subalgebra generated by them. 
This problem is important
in the study of tame automorphisms of $K[X]$ and $K\langle X\rangle$. 

If $f$ and $g$ are algebraically dependent in $f,g\in K[X]$, 
then the theorem of Zaks \cite{Z}, 
see also Eakin \cite{E} for a simple proof and generalizations, gives that 
their integral closure in $K[X]$ is a polynomial subalgebra $K[h]$. 
If $f$ and $g$ are algebraically dependent
in $K\langle X\rangle$, then they commute, see Cohn \cite{C},
and the theorem of Bergman \cite{B1} 
gives that the centralizer of $f$  is an algebra of the form $K[h]$, 
$h\in K\langle X\rangle$. 
In both the cases not too much is known for the minimal degree of the elements
of the subalgebra generated by $f$ and $g$. 
For example, the famous  Abhyankar -- Moh -- Suzuki
theorem \cite{AM, Su} gives that if $f,g\in K[x]$ 
generate the whole algebra $K[x]$, then
$\text{deg}(f)$ divides $\text{deg}(g)$ or vice versa. 
Also, if $\varphi=(f,g)$ is an
automorphism of $K[x,y]$ or $K\langle x,y\rangle$
(i.e., $\varphi(x)=f$, $\varphi(y)=g$), then
$f$ and $g$ may be of arbitrary high degrees. Then one of the degrees
$\text{deg}(f)$ and $\text{deg}(g)$ divides the other and one of
the leading homogeneous components of $f$ and $g$ is a power of the other.
Clearly $f$ and $g$ generate 
the whole algebra $K[x,y]$ or $K\langle x,y\rangle$. 
Hence there is no useful estimate of the minimal degree of the subalgebra 
generated by $f$ and $g$ if there are no restrictions on their properties.
Several recent results have shown that the natural statement of the
problem is the following:

\begin{problem}\label{estimate the minimal degree}
Let $f$ and $g$ be algebraically independent polynomials 
in $K[X]$ or $K\langle X\rangle$ such that the homogeneous components of
maximal degree of $f$ and $g$ are algebraically dependent. 
If the degrees of $f$ and $g$ do not divide each other, 
find an estimate of the minimal degree of the nonconstant elements 
of the subalgebra generated by $f$ and $g$.
\end{problem}

Using Poisson brackets,
Shestakov and Umirbaev \cite{SU1} gave an estimate for the polynomial case
in terms of the degree of the commutator $[f,g]$ considered as an element
of the free Poisson algebra generated by $X$. This allowed them \cite{SU2}
to discover an algorithm which decides whether an automorphsim of
the polynomial algebra $K[x,y,z]$ is tame and to solve the famous Nagata 
Conjecture \cite{N} that the Nagata automorphism is wild. As a byproduct
of their approach, Shestakov and Umirbaev obtained aslo a new proof of the
Jung -- van der Kulk theorem \cite{J, K} that the automorphisms of $K[x,y]$ are tame.
Later, the estimate was used by Umirbaev and Yu \cite{UY}
to solve a stronger version of the Nagata Conjecture concerning the wildness
of the coordinates of a wild automorphism of $K[x,y,z]$. 

Recently Makar-Limanov and Yu \cite{MLY} have developed a new method 
based on the Lemma on radicals in the Malcev -- Neumann algebra of formal power
series and have obtained an estimate for the minimal degree of the elements
of the subalgebra generated by $f,g$ in $K\langle X\rangle$ depending 
on the degree of the commutator $[f,g]$: If $f$ and $g$ are as in Problem 
\ref{estimate the minimal degree} and $p(x,y)\in K\langle x,y\rangle$, then 
\[
\text{deg}(p(f,g))\geq D(f,g)w_{\text{deg}(f),\text{deg}(g)}(p),
\]
where 
\[
D(f,g)=\frac{\text{deg}([f,g])}{\text{deg}(fg)}
\]
and $w_{\text{deg}(f),\text{deg}(g)}(p)$ is the weighted degree of $p(x,y)$,
defined by
\[
w_{\text{deg}(f),\text{deg}(g)}(x)=\text{deg}(f),\quad
w_{\text{deg}(f),\text{deg}(g)}(y)=\text{deg}(g).
\]
The application of the Lemma on radicals to the commutative case gives
the estimate
\[
\text{deg}(p(f,g))\geq D(f,g)w_{\text{deg}(f),\text{deg}(g)}(p),
\]
where $p(x,y)\in K[x,y]$,
\[
D(f,g)=\left[
1-\frac{(\text{deg}(f),\text{deg}(g))(\text{deg}(fg)-\text{deg}(df\wedge dg))}
{\text{deg}(f)\text{deg}(g)}\right],
\] 
$(m,n)$ is the greatest common divisor of $m,n$ and
\[
df\wedge dg=\sum\left(
\frac{\partial f}{\partial x_i}\frac{\partial g}{\partial x_j}
-\frac{\partial f}{\partial x_j}\frac{\partial g}{\partial x_i}
\right)(dx_i\wedge dx_j)
\]
is the corresponding differential 2-form.

It is easy to see that in principal case (when $p$ has outer rank two),
in noncommutative case we have $\deg(p(f,g))\ge\deg([f,g])$
and in commutative case $\deg(p(f,g))\ge\deg(J(f,g))+2$. See,
for instance, Gong and Yu \cite{GY2}.

These estimates have been used by Jie-Tai Yu \cite{Y1} 
and Gong and Yu \cite{GY1, GY2},  to obtain new results 
on retracts and test elements of $K[x,y]$ and $K\langle x,y\rangle$ as well as
a new proof of the theorem of Czerniakiewicz and Makar-Limanov \cite{Cz, ML}
for the tameness of the automorphisms of $K\langle x,y\rangle$. 

Umirbaev \cite{U1} described the group of tame automorphisms of $K[x,y,z]$
in terms of generators and defining relations. As a consequence, in \cite{U2} 
he developed a method to recognize wild automorphisms of special kind 
of the free algebra $K\langle x,y,z\rangle$. 
In particular, he solved the well known conjecture
that the Anick automorphism of $K\langle x,y,z\rangle$ is wild.
The method of Umirbaev \cite{U2} was further developed by the authors \cite{DY}
in the spirit of the results in \cite{UY}. But up till now, there is no algorithm 
which decides whether a given automorphism of $K\langle x,y,z\rangle$ is tame or wild.
A serious obstacle to the solution of this problem is that there is no estimate
for the degree of the commutator $[f,g]$ for $f,g\in K\langle X\rangle$ being as in
Problem \ref{estimate the minimal degree}.

In his survey \cite{Y2} Jie-Tai Yu raised the following

\begin{conjecture}\label{degree of commutator} 
{\rm (Jie-Tai Yu)}
Let $f$ and $g$ be algebraically independent polynomials 
in $K\langle X\rangle$ such that the homogeneous components of
maximal degree of $f$ and $g$ are algebraically dependent. 
Let $f$ and $g$ generate their centralizers $C(f)$ and $C(g)$ in $K\langle X\rangle$,
respectively. If neither of the degrees of $f$ and $g$ divides the other, then 
\[
\text{\rm deg}([f,g])>\min\{\text{\rm deg}(f),\text{\rm deg}(g)\}.
\]
\end{conjecture}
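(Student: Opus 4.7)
The abstract already announces that the statement will be refuted by an explicit counterexample, so my task is to construct one. My strategy is to look for $f$ and $g$ of the form
\[
f = u^m + r, \qquad g = u^n + s,
\]
where $u\in K\langle X\rangle$ is a fixed monomial that is not a power of any shorter monomial (so $u=xy$ is the simplest test case), $m,n\geq 2$ are chosen with $m\nmid n$ and $n\nmid m$, and $\deg r < m\deg u$, $\deg s < n\deg u$, so that the leading homogeneous components are the algebraically dependent pair $u^m,u^n$. Since $u^m$ and $u^n$ commute, an expansion gives
\[
[f,g]=[u^m,s]-[u^n,r]+[r,s].
\]
Therefore, if I can arrange $[u^m,s]=[u^n,r]$, then $[f,g]=[r,s]$, and the problem reduces to a linear bimodule equation whose solutions will determine how small $\deg[f,g]$ can be made.

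The first technical step is to describe $K\langle X\rangle$ as a $K[u]$-bimodule. Every word in $X^{*}$ has a unique factorization $u^a\,v\,u^b$ where $v$ neither extends the left $u^a$ nor the right $u^b$; this yields a decomposition $K\langle X\rangle=\bigoplus_v K[u]\,v\,K[u]$ as $K[u]$-bimodules. The equation $[u^m,s]=[u^n,r]$ decouples across these components, turning it into a concrete problem about polynomials in two variables (the left and right powers of $u$). A first canonical solution is obtained by the telescoping identity
\[
\sum_{i=0}^{n-1}u^{mi}[u^m,w]u^{m(n-1-i)}=[u^{mn},w]=\sum_{j=0}^{m-1}u^{nj}[u^n,w]u^{n(m-1-j)},
\]
which produces $r,s$ for any inner word $w$, but with degrees much too large to threaten the conjecture. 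The real step is to combine several such building blocks, for carefully chosen inner words $w_1,w_2,\ldots$, so that the bimodule analysis allows the high-order parts of both $r$ and $s$ and, crucially, of $[r,s]$ to cancel, bringing $\deg[r,s]$ strictly below $n\deg u=\min\{\deg f,\deg g\}$ (with the convention $n<m$).

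The final step is to verify the remaining hypotheses on the candidate $(f,g)$: algebraic independence over $K$ (which is automatic from the leading-term structure together with Cohn's theorem, since $f$ and $g$ visibly fail to commute), algebraic dependence of the leading components (built into the construction), non-divisibility of the degrees (guaranteed by the choice of $m,n$), and that each of $f,g$ generates its own centralizer. The last point is checked via Bergman's centralizer theorem: it suffices to show that $f$ and $g$ are not proper powers in $K\langle X\rangle$, which follows from $u$ itself not being a power of a monomial together with the choice of $m$ and $n$.

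The main obstacle is clearly the second step: the naive telescoping solution is too coarse, so one must exploit the fine structure of $K\langle X\rangle$ as a $K[u]$-bimodule to simultaneously keep $\deg r<m\deg u$, $\deg s<n\deg u$, and force enough cancellation in $[r,s]$ to make its degree strictly smaller than $n\deg u$. The claim in the abstract that $\deg[f,g]$ can be pushed arbitrarily close to $(\deg g)/2$ suggests that this must be done in a one-parameter family, so in practice I would set up an ansatz indexed by an integer parameter and track the leading-order cancellations as a function of that parameter.
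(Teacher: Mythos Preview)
The overall framework you propose---writing $f=u^m+r$, $g=u^n+s$, reducing to the equation $[u^m,s]=[u^n,r]$, and analysing $K\langle X\rangle$ as a $K[u]$-bimodule---is exactly the route the paper takes. But there is a genuine gap in your bimodule analysis, and it is precisely the missing ingredient that makes the counterexample work.

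Your claimed decomposition $K\langle X\rangle=\bigoplus_v K[u]\,v\,K[u]$ into \emph{free} cyclic bimodules is not correct in general, and the factorisation $u^a v u^b$ you describe is not unique. When $u$ has the form $(v_1v_2)^kv_1$ with $v_1v_2\ne v_2v_1$ and $k\ge 1$, the monomials $t_1=v_1v_2$ and $t_2=v_2v_1$ satisfy $t_1u=ut_2$; the word $t_1u$ then has two admissible factorisations $u^0\cdot t_1\cdot u^1=u^1\cdot t_2\cdot u^0$, the cyclic pieces $K[u]\,t_1\,K[u]$ and $K[u]\,t_2\,K[u]$ overlap, and together they form a two-generated bimodule with the nontrivial defining relation $t_1u=ut_2$. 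The paper's Theorem~2.1 classifies all the summands: besides $K[u]$ and the free cyclic ones you describe, there is this third, non-free type. For your suggested test case $u=xy$ no such pair $(t_1,t_2)$ exists, so the decomposition really is into free pieces---and that is exactly why $u=xy$ will not produce a counterexample by this method.

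In a free summand the homogeneous solutions of $[u^m,s]=[u^n,r]$ are, up to a common factor, precisely your telescoping ones; in particular $\deg s\ge (n-1)\deg u$, and combining several cores $v$ does not lower this bound. It is the relation $t_1u=ut_2$ in a type-(iii) summand that permits a solution with $s=t_1+t_2$ of degree $\deg t_1<\deg u$. The paper takes $u=(xy)^kx$ (so $\deg u=2k+1$), $m=3$, $n=2$, $s=xy+yx$, $r=u\cdot xy+u\cdot yx+yx\cdot u$; one checks $[u^3,s]=[u^2,r]$ using $xy\cdot u=u\cdot yx$, and then $[f,g]=[r,s]$ has degree $2k+5<4k+2=\deg g$. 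The parameter $k$ is what pushes $\deg([f,g])/\deg g$ toward $1/2$. So the key step is not ``combine several telescoping blocks and hope for cancellation in $[r,s]$'' but ``choose $u$ so that a non-free bimodule summand exists, and exploit its defining relation to get $s$ of abnormally small degree''.
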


The condition that the degrees of $f$ and $g$ do not divide each other
is essential. It does not hold when $\varphi=(f,g)$ is an automorphism 
of $K\langle x,y\rangle$ when the commutator test of Dicks \cite{D} gives that 
$[f,g]=\alpha [x,y]$, $0\not=\alpha\in K$. 
The condition that $f$ and $g$ generate their centralizers
is also necessary. For example, if
\[
f=y+(x+y^k)^m,\quad g=(x+y^k)^n,\quad m>n,\quad k>2,
\]
then 
\[
[f,g]=[y,(x+y^k)^n].
\]
The homogeneous component of maximal degree of $[f,g]$ is equal to
\[
[y,xy^{k(n-1)}+y^2xy^{k(n-2)}+\cdots+y^{k(n-1)}x],
\]
\[
\text{deg}([f,g])=k(n-1)+2<kn=\text{deg}(g)<km=\text{deg}(f).
\]

If this conjecture were true, it would give a nice description of the group
of tame automorphisms of $K\langle x,y,z\rangle$, much better than
the description of the group of tame automorphisms of $K[x,y,z]$.
In the approach of Makar-Limanov and Yu \cite{MLY}, they work in the 
Malcev -- Neumann algebra ${\mathcal A}(X)$ of formal power series with monomials from
the free group generated by $X$, allowing infinite sums of homogeneous components of 
negative degree and only finite number of homogeneous components of positive degree.
Conjecture \ref{degree of commutator} would follow from
the following conjecture that was formulated by Makar-Limanov and Jie-Tai Yu
during their attempt to solve Conjecture 0.2.

\begin{conjecture}\label{conjecture on radicals}
{\rm (Makar-Limanov and Jie-Tai Yu)}
Let $g\in K\langle X\rangle$ generate its centralizer and let the homogeneous
component of maximal degree of $g$ is an $n$-th power of an element of $K\langle X\rangle$.
Then, for every $m>n$ which is not divisible by $n$, the formal power series
$g^{m/n}\in {\mathcal A}(X)$ has a monomial of positive degree containing 
a negative power of a generator.
\end{conjecture}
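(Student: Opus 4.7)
The abstract flags the final conjecture as one the paper refutes, so my plan is to construct a counterexample rather than a proof. Following the abstract's hint, I would look for a pair of the shape
\[
g=u^n+s,\qquad f=u^m+r,
\]
where $u\in K\langle X\rangle$ is a primitive monomial (not a proper power in the free semigroup), $m>n$ with $n\nmid m$, and $\deg r<m\deg u$, $\deg s<n\deg u$. If such a pair can be produced with $\deg[f,g]\ll\deg g$, then the leading component $u^n$ of $g$ is an $n$-th power, so $g^{m/n}$ is well-defined as a Malcev--Neumann formal series with leading term $u^m$. Since $f$ almost commutes with $g$ in $\mathcal{A}(X)$ and has the same leading term, the positive-degree part of $g^{m/n}$ must agree with $f=u^m+r\in K\langle X\rangle$ up to negligible correction, hence is a genuine polynomial containing no negative powers of the generators, refuting the conjecture.

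Unrolling the commutator using $[u^m,u^n]=0$,
\[
[f,g]=[u^m+r,\,u^n+s]=[u^m,s]-[u^n,r]+[r,s].
\]
The term $[r,s]$ is automatically low-degree once $r,s$ are kept small, so the core problem is to solve
\[
[u^m,s]=[u^n,r]
\]
nontrivially with $r,s\in K\langle X\rangle$ of suitably restricted degree. I would attack this by viewing $K\langle X\rangle$ as a $K[u]$-bimodule: partition the monomials in $X$ by their maximal $u$-content on the left and on the right, so each isotypic summand becomes a cyclic $K[u]$-bimodule on which $\mathrm{ad}(u^k)$ acts as an explicit $K[u]$-linear map. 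The commutator equation then decouples into a family of linear equations over $K[u]$, whose solvability reduces to a concrete divisibility statement.

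Once $r,s$ are produced, the remaining checks are comparatively routine: the leading term of $g$ is still $u^n$ by the degree bound on $s$; $g$ generates its own centralizer provided $s$ is chosen generically enough that no smaller element of $K\langle X\rangle$ has $g$ as a proper power, using Bergman's theorem that centralizers in $K\langle X\rangle$ are polynomial in one variable; and the identification of $f$ with the positive-degree part of $g^{m/n}$ follows from the uniqueness of a series with given leading term that almost-commutes with $g$ in $\mathcal{A}(X)$. The main obstacle is the very first step: solving $[u^m,s]=[u^n,r]$ nontrivially with $r,s$ of the prescribed small degree is not guaranteed a priori when $n\nmid m$, and producing such solutions requires a careful arithmetic-combinatorial analysis of the $K[u]$-bimodule pieces of $K\langle X\rangle$. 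I would begin with the simplest case, e.g.\ $u=x_1x_2$ and $(m,n)=(3,2)$, to extract the pattern before attempting a general construction.
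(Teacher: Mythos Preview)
Your overall plan—take $f=u^m+r$, $g=u^n+s$, solve $[u^m,s]=[u^n,r]$ via the $K[u]$-bimodule structure of $K\langle X\rangle$, and then identify $f$ with the positive-degree part of $g^{m/n}$—is exactly the paper's route. The real gap is in the bimodule step. You claim that stripping maximal $u$-content on each side makes ``each isotypic summand a cyclic $K[u]$-bimodule,'' but that stripping is not always well defined: when $u$ has a proper prefix $t_1$ and a proper suffix $t_2$ of the same length with $t_1u=ut_2$, the monomial $t_1u=ut_2$ can be reduced either to $t_1$ or to $t_2$. The paper's bimodule theorem shows that in exactly this situation one gets a \emph{two-generator} summand with the single relation $u_2t_1=u_1t_2$, and that such an overlap forces $u=(v_1v_2)^kv_1$ with $v_1v_2\ne v_2v_1$. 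Your proposed test case $u=x_1x_2$ has no such overlap (its only proper prefix is $x_1$, its only proper suffix is $x_2$, and $x_1u\ne ux_2$), so you would see only free cyclic summands and would not find the counterexample.

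This distinction is decisive. On a free cyclic summand the minimal solution of the commutator equation is $s=\Phi_n(u_1,u_2)t$, $r=\Phi_m(u_1,u_2)t$ with $\Phi_k=(u_1^k-u_2^k)/(u_1-u_2)$, forcing $\deg s\ge(n-1)\deg u$; then in $\rho=g^{1/n}=u+a_1+a_2+\cdots$ one gets $a_1=t$ of positive degree, and the quadratic correction $a_2\notin K\langle X\rangle$ still contributes to $\rho^m$ in positive degree for every $m\ge3$, so no counterexample arises. On the two-generator summand one can instead take $s=t_1+t_2$ of degree strictly below $\deg u$: with $u=(xy)^kx$, $t_1=xy$, $t_2=yx$ the paper finds $a_1=t_2u^{-1}$ of degree $1-2k$, and for $k\ge2$ every term of $\rho^3$ beyond $u^3$ and $u^2a_1+ua_1u+a_1u^2=r$ has nonpositive degree, so the positive part of $g^{3/2}$ is exactly $f=u^3+r\in K\langle X\rangle$. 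The paper verifies this last point by direct computation of $\rho$ and $\rho^3$ rather than by your ``uniqueness of an almost-commuting series'' heuristic, which as stated is too vague to pin down the difference $f-g^{m/n}$.
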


The analogue of Conjecture \ref{degree of commutator} for polynomial algebras is:
{\it If $f$ and $g$ are algebraically independent polynomials 
in $K[X]$ such that the homogeneous components of
maximal degree of $f$ and $g$ are algebraically dependent, 
$f$ and $g$ generate their integral closures $C(f)$ and $C(g)$ in $K[X]$,
respectively, and neither of the degrees of $f$ and $g$ divides the other, then 
\[
\text{\rm deg}(df\wedge dg)>\min\{\text{\rm deg}(f),\text{\rm deg}(g)\}.
\]}

Note that in the case of $K[x,y]$
\[
\text{\rm deg}(df\wedge dg)=\deg(J(f,g))+2
=\text{\rm deg}
\left(\frac{\partial f}{\partial x}\frac{\partial g}{\partial y}
-\frac{\partial f}{\partial y}\frac{\partial g}{\partial x}\right)+2.
\]

Recently, Makar-Limanov has found a simple example of $f,g\in K[x,y]$
such that $f$ and $g$ may be of arbitrary high degree but
\[
\text{\rm deg}(df\wedge dg)=3.
\]
It is easy to see that the analogue of Conjecture
\ref{conjecture on radicals} does not hold in the commutative case.

In the present paper we present a counterexample to Conjectures 
\ref{degree of commutator} and \ref{conjecture on radicals}.
The polynomials $f$ and $g$ in Conjecture \ref{degree of commutator}
are of degree $3(2k+1)$ and $2(2k+1)$, respectively, where
$k\geq 2$, and the degree of the commutator $[f,g]$ is equal to 
$2k+5<\text{deg}(g)<\text{deg}(f)$. The same element $g$ serves as 
a counterexample to Conjecture \ref{conjecture on radicals}.
Comparing with the commutative example of Makar-Limanov, we see that
in the commutative case the quotient
\[
\frac{\text{\rm deg}(df\wedge dg)}{\text{\rm deg}(fg)}
\]
can be very small. In our example, we have that
\[
\frac{\text{deg}([f,g])}{\text{deg}(g)}=\frac{1}{2}+\frac{2}{2k+1}
\]
which can be very close to 1/2. We do not know how far is this quotient from the minimal possible value of the fraction.

\begin{problem}\label{linear estimate of degree}
If $f,g\in K\langle X\rangle$ are as in Conjecture \ref{degree of commutator},
does there exist a positive constant $a$ such that
\[
\text{\rm deg}([f,g])>a\text{\rm deg}(g)?
\]
\end{problem}

If the answer to this problem is affirmative, this still would simplify
the study of the group of tame automorphisms of
$K\langle x,y,z\rangle$.

Although our counterexample is quite simple, in order to find it we have
studied the structure of the free algebra $K\langle X\rangle$ as a bimodule of
$K[u]$, where $u$ is a monomial which is not a proper power. It has turned out that
$K\langle X\rangle$ is a direct sum of three types of bimodules:
the polynomial algebra $K[u]$, free bimodules generated by a single monomial,
and two-generated bimodules with a nontrivial defining relation.
Then we have solved the equation 
\[
[u^m,s]=[u^n,r]
\]
with unknowns $r,s\in K\langle X\rangle$. 
Due to the existence of the $K[u]$-bimodules of the third kind
in $K\langle X\rangle$, we have succeeded to construct the counterexample.

An essential part of the combinatorial theory of free associative algebras is based
on the FIR (free ideal ring) property and the weak Eucledian algorithm
\cite{C}. Also, the theory of equations in $K\langle X\rangle$ may be considered
in the framework of the recently developed universal algebraic geometry,
see the survey by Plotkin \cite{P},
and in the spirit of algebraic geometry over groups, see \cite{BMR, MR}.
Another possibility to consider equations in $K\langle X\rangle$ is from
algorithmic point of view. For example Gupta and Umirbaev \cite{GU}
proved the algorithmic solvability of the problem whether or not a given system  
of linear equations with coefficients in $K\langle X\rangle$ is consistent.
But very little is known about the concrete form of the solutions of an explicitly
given equation. For example, recently Remeslennikov and St\"ohr \cite{RS}
studied the equation $[x,a]+[y,b]=0$ with unknowns $x,y$ in the free Lie algebra $L(X)$ 
where $a,b$ are free generators of $L(X)$.
Hence the description of $K\langle X\rangle$ as a $K[u]$-bimodule 
and the solution of the equation $[u^m,s]=[u^n,r]$
are naturally related with the combinatorial and algorithmic properties of 
free algebras.

\section{The example of Makar-Limanov}

In this section we present the example of Makar-Limanov of two polynomials
$f,g\in K[x,y]$ such that the degrees of $f$ and $g$ are arbitrary high,
do not divide each other and the degree of the Jacobian of $f$ and $g$ is equal to 1,
that answered the commutative version of Conjecture 0.3 
negatively.
It shows that it is unlikely to solve the famous Jacobian conjecture
by degree estimate, as suggested in \cite{SU1}.
The example was communicated by Makar-Limanov to Jie-Tai Yu
in August 2007 when they were trying to attack Conjecture 0.2 and Conjecture 0.3.

\begin{example}\label{example of ML}
Let $a>b$ be positive integers such that $a-b>1$ and $a-b$ divides $a+1$. Let 
\[
c=a-b,\quad k=\frac{a+1}{c},
\]
\[
f(x,y)=yp(x^ay^b),\quad g(x,y)=xy(1+x^ay^b),
\]
where $p(z)\in K[z]$ is a polynomial of degree $k$.
Then 
\[
\text{\rm deg}(f)=(a+b)k+1=(a+b)\frac{a+1}{a-b}+1=\frac{(a+b+2)a}{a-b}
=\frac{(a+b+2)a}{c},
\]
\[
\text{\rm deg}(g)=a+b+2<(a+b+2)\frac{a}{a-b}=\text{\rm deg}(f).
\]
Clearly, $a-b$ cannot divide $a$ because divides $a+1$ and $a-b>1$. Also,
the degree of $f$ and $g$ can be made as large as we want.
Since $f$ cannot be presented in the form $q(h)$ for a polynomial $h$
of lower degree, it generates its integral closure in $K[x,y]$,
and similarly for $g$.

Direct computations show that
\[
J(f,g)=\frac{\partial f}{\partial x}\frac{\partial g}{\partial y}
-\frac{\partial f}{\partial y}\frac{\partial g}{\partial x}
\]
\[
=y\left[-(1+(a+1)x^ay^b)p(x^ay^b)+(a-b)x^ay^b(1+x^ay^b)p'(x^ay^b)\right],
\]
where $p'(z)$ is the derivative of $p(z)$. We want to choose $p(z)$ in such a way that
$J(f,g)=y$. This is equivalent to the condition
\[
-(1+kcz)p(z)+cz(1+z)p'(z)=1.
\]
Let
\[
p(z)=-1+p_1z+p_2z^2+\ldots+p_{k-1}z^{k-1}+p_kz^k,\quad p_i\in K.
\]
Then 
\[
1=-(1+kcz)p(z)+cz(1+z)p'(z)
\]
\[
=-(1+kcz)(-1+p_1z+p_2z^2+\ldots+p_kz^k)+cz(1+z)(p_1+2p_2z+\ldots+kp_kz^{k-1})
\]
\[
=1-(p_1-kc)z-(p_2+kcp_1)z^2-\cdots-(p_k+kcp_{k-1})z^k-kcp_kz^{k+1}
\]
\[
+cp_1z+c(2p_2+p_1)z^2+\cdots+c(kp_k+(k-1)p_{k-1})z^k+kcp_kz^{k+1}
\]
\[
=1+((c-1)p_1+kc)z+((2c-1)p_2+c(1-k)p_1)z^2
+((3c-1)p_3+c(2-k)p_2)z^3
\]
\[
+\cdots+((kc-1)p_k-cp_{k-1})z^k.
\]
Hence
\[
p_1=-\frac{kc}{c-1},\quad p_2=\frac{(k-1)c}{2c-1}p_1,\quad
p_3=\frac{(k-2)c}{3c-1}p_2,\quad\cdots,\quad p_k=\frac{c}{kc-1}p_{k-1}.
\]
Since $p_1\not=0$, we obtain that $p_i\not=0$ for all $i$. Hence
the degree of $f$ is really equal to the prescribed
\[
\text{\rm deg}(f)=\frac{(a+b+2)a}{c}
\]
and 
\[
\text{\rm deg}(J(f,g))=\text{\rm deg}(y)=1.
\]
\end{example}

\section{The free algebra as a bimodule}

Let $\langle X\rangle$ be the free semigroup generated by $X$.
In this section we fix a monomial $u\in \langle X\rangle$ of positive degree
which is not 
a proper power of another monomial. We consider the algebra $K\langle X\rangle$ 
as a $K[u]$-bimodule. Equivalently,
$K\langle X\rangle$ is a $K[u_1,u_2]$-module with action of $u_1$ and $u_2$
defined by
\[
u_1w=uw,\quad u_2w=wu,\quad w\in \langle X\rangle.
\]
Clearly, $K\langle X\rangle$ decomposes as a $K[u_1,u_2]$-module as
\[
K[u]\bigoplus\left(\sum K[u_1,u_2]v\right),
\]
where the inner sum runs on all $v\in \langle X\rangle$ which do not commute with $u$.
We want to find the complete description   
of the $K[u_1,u_2]$-module $K\langle X\rangle$.
If $v_1,v_2\in\langle X\rangle$, we call $v_1$ a beginning, respectively a tail
of $v_2$ if there exists $w\in\langle X\rangle$ such that $v_2=v_1w$,
respectively $v_2=wv_1$.

\begin{theorem}\label{relations as bimodule}
As a $K[u_1,u_2]$-module, $K\langle X\rangle$ 
is a direct sum of three types of submodules:
{\rm (i)} $K[u]$; {\rm (ii)} $K[u_1,u_2]t$; 
{\rm (iii)} $K[u_1,u_2]t_1+K[u_1,u_2]t_2$, where:

{\rm (i)} $K[u]$ is generated as a $K[u_1,u_2]$-module by $1$, and $u^p=u_1^p\cdot 1$.
The defining relation for this submodule is $u_1\cdot 1=u_2\cdot 1$;

{\rm (ii)} $K[u_1,u_2]t$ is a free $K[u_1,u_2]$-module and 
$u$ is neither a beginning nor
a tail of $t$. If $t$ is a beginning, respectively a tail of $u$, 
and $t'$ is the tail, respectively the beginning of $u$ of the same degree as $t$,
then $tu\not=ut'$, respectively $ut\not=t'u$;

{\rm (iii)} $t_1$ and $t_2$ are of the same degree and are, respectively,
a proper beginning and a proper tail of $u$ such that $t_1u=ut_2$. 
The defining relation of this
submodule is $u_2t_1=u_1t_2$. There exist $v_1,v_2\in\langle X\rangle$ with
$v_1v_2\not=v_2v_1$ and a positive integer $k$ such that 
\[
u=(v_1v_2)^kv_1,\quad t_1=v_1v_2,\quad t_2=v_2v_1.
\]
 \end{theorem}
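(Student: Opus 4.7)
The plan is to partition the monomial basis $\langle X\rangle$ of $K\langle X\rangle$ into the classes of the equivalence relation $\sim$ generated by $w\sim uw$ and $w\sim wu$. The $K$-span of each class is a $K[u_1,u_2]$-submodule, so $K\langle X\rangle$ is the direct sum of these submodules and it suffices to show every class realizes one of the three types (i)--(iii). By unique factorization in the free monoid, $1\sim w$ forces $w$ to be a power of $u$; thus the class of $1$ equals $\{u^k:k\geq 0\}$, spans $K[u]$, and has presentation $u_1\cdot 1=u_2\cdot 1$, giving (i). For any other class $C$, I would call $t\in C$ \emph{minimal} if $u$ is neither a beginning nor a tail of $t$, equivalently if $t$ is not of the form $uw$ or $wu$; stripping maximal $u$-prefixes and suffixes from any representative shows $C$ contains a minimal element.

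I would next show that for a minimal $t$ the cyclic submodule $K[u_1,u_2]t$ is free: a relation $u^p t u^q=u^{p'}tu^{q'}$ with $(p,q)\neq(p',q')$ reduces after cancellation in the free monoid to $tu^d=u^d t$ with $d\geq 1$; since $u$ is not a proper power and monomials that commute in a free monoid are powers of a common one, $t$ would have to be a power of $u$, contradicting $C\neq\{u^k\}$. The decisive step is to analyze a class containing two distinct minimal monomials $t_1\neq t_2$. An identity $u^a t_1 u^b=u^c t_2 u^d$, combined with Levi's lemma and the fact that neither $t_i$ starts or ends with $u$, reduces after cancellation to the form $t_1 u^f=u^e t_2$ with $e,f\geq 1$ (after possibly swapping $t_1$ and $t_2$). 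The length identity $(e-f)|u|=|t_1|-|t_2|$ together with $|t_i|<|u|$ forces $e=f$. Splitting $t_1 u^f=(t_1 u)u^{f-1}$ and iterating Levi yields a chain $t_1 u=u\epsilon_1,\ \epsilon_1 u=u\epsilon_2,\ldots$ of length $f$; I will verify in the next paragraph that $\epsilon_1$ is a proper tail of $u$ but not a beginning, so the chain breaks after one step, forcing $e=f=1$ and $t_1 u=u t_2$.

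For case (iii), the Lyndon--Sch\"utzenberger theorem applied to $t_1 u=u t_2$ produces $v_1,v_2\in\langle X\rangle$ and $k\geq 1$ with $u=(v_1v_2)^k v_1$, $t_1=v_1v_2$, and $t_2=v_2v_1$. Were $v_1v_2=v_2v_1$, the words $v_1,v_2$ would be powers of a common word and $u$ a proper power, contrary to hypothesis, so $v_1v_2\neq v_2v_1$. This confirms the pending claim: the prefix of $u=(v_1v_2)^k v_1$ of length $|v_2v_1|$ is $v_1v_2\neq v_2v_1=\epsilon_1$. By the same token, $C$ admits no third minimal monomial, since $t_2 u=u t_3$ would force $t_2=v_2v_1$ to be a beginning of $u$. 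Finally, every monomial of $C$ has one of the two shapes $u^p t_1 u^q$ or $u^p t_2 u^q$; all coincidences among such monomials are consequences of $u^p t_1 u^{q+1}=u^{p+1}t_2 u^q$, and so $K[u_1,u_2]t_1+K[u_1,u_2]t_2$ is presented by the single relation $u_2 t_1=u_1 t_2$, which is case (iii). The main obstacle is the word-combinatorics of the second paragraph: reducing an arbitrary orbit equivalence to the clean overlap $t_1 u=u t_2$, where the assumption that $u$ is not a proper power is indispensable.
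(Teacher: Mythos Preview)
Your argument is correct in substance and follows the same overall strategy as the paper: identify the ``minimal'' monomials $t$ (those with $u$ neither a prefix nor a suffix) as module generators, analyze monomial equalities $u^{a}t_1u^{b}=u^{c}t_2u^{d}$, reduce these to the single overlap $t_1u=ut_2$, and extract the decomposition $u=(v_1v_2)^{k}v_1$. One presentational circularity should be repaired: when you invoke Lyndon--Sch\"utzenberger you write ``applied to $t_1u=ut_2$'', but at that moment you only possess the first chain step $t_1u=u\epsilon_1$; the identification $\epsilon_1=t_2$ is exactly what remains to be shown. The fix is immediate---apply the theorem to $t_1u=u\epsilon_1$ to obtain $u=(v_1v_2)^{k}v_1$, $t_1=v_1v_2$, $\epsilon_1=v_2v_1$, conclude that $\epsilon_1$ is not the prefix of $u$ of its length, and only then deduce $f=1$ and $\epsilon_1=t_2$.

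For the key reduction step the paper takes a shorter path than your chain argument. From $t_1u^{b}=u^{a}t_2$ it writes $u=t_1w_1=w_2t_2$ with $|w_1|=|w_2|$, substitutes, and compares the block of length $|w_1|$ immediately after the initial $t_1$ on both sides to read off $w_1=w_2$; then $t_1u=t_1w_1t_2=ut_2$ follows in one line, with no need to first establish $e=f$. The $(v_1v_2)^{k}v_1$ structure is derived afterwards by a separate short induction on the maximal power of $t_1$ beginning $u$. Your route instead bundles the reduction and the structural decomposition into a single use of Lyndon--Sch\"utzenberger, which is equally valid once the circularity above is fixed; it has the mild advantage that the combinatorial lemma does double duty, at the cost of the extra length and chain bookkeeping.
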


\begin{proof}
The statement (i) is obvious so we only concentrate on (ii) and (iii). 
Each $v\in\langle X\rangle$ has the form $v=u^av'$, where $u$ is not a beginning of $v'$.
Similarly, $v'=tu^b$, where $u$ is not a tail of $t$. Hence, by the property that $u$
is not a proper power of another monomial, we conclude that $K\langle X\rangle$ 
is generated as a $K[u_1,u_2]$-module by 1 and monomials $t$ which do not commute
with $u$ and $u$ is neither a beginning nor a tail of $t$. Let 
\begin{equation}\label{explicit relation}
\sum_{i=1}^p\gamma_iu^{a_i}t_iu^{b_i}=0,\quad 0\not=\gamma_i\in K,
\end{equation}
be a relation between such $t_i$, where the triples $(a_i,b_i,t_i)$
are pairwise different, with possible $t_i=t_j$ for some $i\not=j$.
We may assume that this relation is homogeneous, i.e.,
$(a_i+b_i)\text{deg}(u)+\text{deg}(t_i)$ is the same for all monomials. 
For each $t_i$ there exists a $t_j$ such that $u^{a_i}t_iu^{b_i}=u^{a_j}t_ju^{b_j}$.
Let $u^{a_1}t_1u^{b_1}=u^{a_2}t_2u^{b_2}$.
We may assume that $a_1\leq a_2$.
We cancel $u^{a_1}$ and obtain
that $t_1u^{b_1}=u^at_2u^{b_2}$, $a=a_2-a_1$. 
Similarly, if $b_1\leq b_2$, then $t_1=u^at_2u^b$, $b=b_2-b_1$.
By the choice of $t_1,t_2$ we derive that $t_1=t_2$, $a=b=0$, which contradicts with
$(a_1,b_1,t_1)\not=(a_2,b_2,t_2)$.
If $b_1>b_2$, then for $b=b_1-b_2$
\[
t_1u^b=u^at_2, \quad a,b>0.
\] 
If $\text{deg}(t_1)\geq \text{deg}(u)$, then $u$ is a beginning of $t_1$
which is impossible. Hence $t_1$ is a beginning of $u$.
Similarly if $\text{deg}(t_2)\geq \text{deg}(u)$,
then $u$ is a tail of $t_2$ which is also impossible. Hence $t_2$ is a tail of $u$.
In this way, in the relation (\ref{explicit relation}) all $t_i$ are beginnings
or tails of $u$. Since (\ref{explicit relation}) is homogeneous, the degree of $t_i$ is 
equal to the residue of the division of the degree of the relation by the degree of $u$.
Hence all $t_i$ are of the same degree smaller than the degree of $u$.
Since the beginnings and the tails of $u$ are determined by their degrees,
we obtain that in (\ref{explicit relation}) $p=2$ and $t_1$ is a beginning and $t_2$ is
a tail of $u$. Let $u=t_1w_1=w_2t_2$, $w_1,w_2\in\langle X\rangle$. 
Since $\text{deg}(t_1)=\text{deg}(t_2)<\text{deg}(u)$, 
\[
\text{deg}(w_1)=\text{deg}(u)-\text{deg}(t_1)
=\text{deg}(u)-\text{deg}(t_2)=\text{deg}(w_2).
\]
Replacing $u=t_1w_1=w_2t_2$
in $t_1u^b=u^at_2$, we obtain
\[
t_1u^b=t_1(w_2t_2)\cdots (w_2t_2)=(t_1w_1)\cdots (t_1w_1)t_2=u^at_2.
\]
Both sides of this equality start with $t_1w_2$ and $t_1w_1$, respectively.
Since $w_1$ and $w_2$ are of the same degree, this implies that $w_1=w_2=w$ and
$u=t_1w=wt_2$. Hence 
\[
t_1u=t_1(wt_2)=(t_1w)t_2=ut_2.
\]
If $t_1=t_2$, then $t_1u=ut_1$ which is impossible because $u$ 
is not a proper power and generates its centralizer.
Hence $t_1\not=t_2$. Using the relation $t_1u=ut_2$ we present the elements of
$K[u_1,u_2]t_1+K[u_1,u_2]t_2$ as linear combinations of
$u_1^at_1=u^at_1$ and $u_1^bu_2^ct_2=u^bt_2u^c$.
It is easy to see that $u^{b_1}t_2u^{c_1}\not= u^{b_2}t_2u^{c_2}$,
because $t_2u\not=ut_2$ and $(b_1,c_1)\not=(b_2,c_2)$.
Similarly $u^at_1=u^bt_2u^c$ is also impossible, because 
$t_1$ is not a tail of $u$ (hence $c=0$) and
$\text{deg}(t_1)=\text{deg}(t_2)$,
$t_1\not=t_2$. 
Hence all relations in the $K[u_1,u_2]$-module generated by $t_1$ and $t_2$ follow from $t_1u=ut_2$. Let $u=t_1^kv_1$, where $k$ is maximal with this property. Then $t_1u=ut_2$
implies that
\[
t_1^{k+1}v_1=t_1^kv_1t_2,\quad t_1v_1=v_1t_2.
\]
Since $t_1$ is not a beginning of $v_1$ (otherwise $u=t_1^{k+1}v_1'$),
we obtain that $v_1$ is a beginning of $t_1$ and $t_1=v_1v_2$ for some 
$v_2\in \langle X\rangle$. Now $t_1v_1=v_1t_2$ gives $v_1v_2v_1=v_1t_2$
and $t_2=v_2v_1$. Hence
\[
u=(v_1v_2)^kv_1,\quad t_1=v_1v_2,\quad t_2=v_2v_1
\]
and $v_1v_2\not=v_2v_1$ because $u$ is not a proper power.
\end{proof}

\begin{remark}\label{several t1 and t2 for one u}
For a fixed $u\in\langle X\rangle$ there may be several
pairs $(t_1,t_2)$ satisfying the condition (iii) of Theorem \ref{relations as bimodule}
but all of them have to be of different degree.
For example, if $u=(xy)^kx$, $k>1$, then for any positive $\ell\leq k$ 
the monomials $t_{1\ell}=(xy)^{\ell}$, $t_{2\ell}=(yx)^{\ell}$ satisfy
$t_{1\ell}u=ut_{2\ell}$.
\end{remark}

Now we are going to  solve the equation $[u^m,s]=[u^n,r]$. It is more convenient
to replace $m$ and $n$ with $\ell m$ and $\ell n$, respectively, where $m$ and $n$ are 
coprime.

\begin{example}\label{solution of commutator equation}
Let $u\in \langle X\rangle$ be a monomial of positive degree which is not a power
of another polynomial. Let $\ell, m,n$ be positive integers such that 
$m>n$ and $m,n$ are
coprime. We consider the equation
\begin{equation}\label{commutator equation}
[u^{\ell m},s]=[u^{\ell n},r].
\end{equation}
Applying Theorem \ref{relations as bimodule}, we write $r$ and $s$ in the form
\[
r=r_1(u)+\sum pt+\sum (p_1t_1+p_2t_2),
\quad r_1\in K[u],\quad p,p_1,p_2\in K[u_1,u_2],
\]
\[
s=s_1(u)+\sum qt+\sum (q_1t_1+q_2t_2),
\quad s_1\in K[u],\quad q,q_1,q_2\in K[u_1,u_2],
\]
where the sums run, respectively, on all monomials $t$ and $t_1,t_2$
described in parts (ii) and (iii) of Theorem \ref{relations as bimodule}.
Clearly, $r_1(u)$ and $s_1(u)$ may be arbitrary polynomials and we have to solve
the following systems for each $t$ and $t_1,t_2$:
\begin{equation}\label{system for t}
[u^{\ell m},q(u_1,u_2)t]=[u^{\ell n},p(u_1,u_2)t],
\end{equation}
\begin{equation}\label{system for t1 and t2}
[u^{\ell m},q_1t_1+q_2t_2]=[u^{\ell n},p_1t_1+p_2t_2].
\end{equation}
We rewrite (\ref{system for t}) in the form
\[
(u_1^{\ell m}-u_2^{\ell m})q(u_1,u_2)=(u_1^{\ell n}-u_2^{\ell n})p(u_1,u_2).
\]
Since $m$ and $n$ are coprime, the greatest common divisor of 
the polynomials $u_1^{\ell m}-u_2^{\ell m}$ and
$u_1^{\ell n}-u_2^{\ell n}$ is equal to $u_1^{\ell}-u_2^{\ell}$ and
we obtain that
\[
p(u_1,u_2)=\frac{u_1^{\ell m}-u_2^{\ell m}}{u_1^{\ell}-u_2^{\ell}}r_2(u_1,u_2),
\quad
q(u_1,u_2)=\frac{u_1^{\ell n}-u_2^{\ell n}}{u_1^{\ell}-u_2^{\ell}}r_2(u_1,u_2),
\]
where $r_2(u_1,u_2)\in K[u_1,u_2]$ is an arbitrary polynomial.

Now we assume that 
$\text{deg}(t_1)=\text{deg}(t_2)<\text{deg}(u)$ 
and $u,t_1,t_2$ satisfy the condition $t_1u=ut_2$. Using this relation
we present $p_1t_1+p_2t_2$ and $q_1t_1+q_2t_2$ in (\ref{system for t1 and t2}) 
in the form
\[
p_1t_1+p_2t_2=p_1(u_1)t_1+p_2(u_1,u_2)t_2,\quad
q_1t_1+q_2t_2=q_1(u_1)t_1+q_2(u_1,u_2)t_2
\]
and rewrite (\ref{system for t1 and t2}) as
\[
(u_1^{\ell m}-u_2^{\ell m})(q_1(u_1)t_1+q_2(u_1,u_2)t_2)
=(u_1^{\ell n}-u_2^{\ell n})(p_1(u_1)t_1+p_2(u_1,u_2)t_2).
\]
We replace $u_2t_1$ with $u_1t_2$ and obtain
\[
u_1^{\ell m}q_1t_1-u_1u_2^{\ell m-1}q_1t_2
+(u_1^{\ell m}-u_2^{\ell m})q_2t_2
\]
\[
=u_1^{\ell n}p_1t_1-u_1u_2^{\ell n-1}p_1t_2
+(u_1^{\ell n}-u_2^{\ell n})p_2t_2.
\]
Comparing the coefficients of $t_1$ and $t_2$, we derive
\[
u_1^{\ell m}q_1(u_1)=u_1^{\ell n}p_1(u_1),
\]
\[
-u_1u_2^{\ell m-1}q_1
+(u_1^{\ell m}-u_2^{\ell m})q_2
=-u_1u_2^{\ell n-1}p_1
+(u_1^{\ell n}-u_2^{\ell n})p_2.
\]
It is sufficient to solve these equations when $p_i,q_i$ are homogeneous.
We may assume that $\text{deg}(q_1)=\text{deg}(q_2)=a$,
$\text{deg}(p_1)=\text{deg}(p_2)=a+\ell(m-n)$. Hence
\[
p_1(u_1)=\xi u_1^{a+\ell (m-n)},\quad 
q_1(u_1)=\xi u_1^a,\quad \xi\in K,
\]
\[
-\xi u_1^{a+1}u_2^{\ell m-1}
+(u_1^{\ell m}-u_2^{\ell m})q_2
=-\xi u_1^{a+\ell (m-n)+1}u_2^{\ell n-1}
+(u_1^{\ell n}-u_2^{\ell n})p_2,
\]
\[
(u_1^{\ell m}-u_2^{\ell m})q_2
+\xi u_1^{a+1}u_2^{\ell n-1}(u_1^{\ell (m-n)}-u_2^{\ell (m-n)})
=(u_1^{\ell n}-u_2^{\ell n})p_2.
\]
Defining the polynomial
\[
\Phi_b(u_1,u_2)=\frac{u_1^{\ell b}-u_2^{\ell b}}{u_1^{\ell}-u_2^{\ell}}
=u_1^{\ell (b-1)}+u_1^{\ell (b-2)}u_2^{\ell}+\cdots+u_2^{\ell (b-1)},
\quad b\geq 1,
\]
and using that
\[
\Phi_m(u_1,u_2)=u_1^{\ell (m-n)}\Phi_n(u_1,u_2)+u_2^{\ell n}\Phi_{m-n}(u_1,u_2),
\]
the equation for $\xi,p_2,q_2$ becomes
\[
(u_1^{\ell (m-n)}\Phi_n+u_2^{\ell n}\Phi_{m-n})q_2
+\xi u_1^{a+1}u_2^{\ell n-1}\Phi_{m-n}
=\Phi_np_2,
\]
\[
(p_2-u_1^{\ell (m-n)}q_2)\Phi_n=
u_2^{\ell n-1}(u_2q_2+\xi u_1^{a+1})\Phi_{m-n}.
\]
Since the polynomials $\Phi_n(u_1,u_2)$ and $u_2^{\ell n-1}\Phi_{m-n}(u_1,u_2)$
are coprime, we obtain
\[
p_2-u_1^{\ell (m-n)}q_2=u_2^{\ell n-1}\Phi_{m-n}r_3,\quad
u_2q_2+\xi u_1^{a+1}=\Phi_nr_3,
\]
where $r_3(u_1,u_2)\in K[u_1,u_2]$.
Hence it is sufficient to solve the equation
\[
u_2q_2(u_1,u_2)+\xi u_1^{a+1}=\Phi_n(u_1,u_2)r_3(u_1,u_2)
\]
for $\xi\in K$ and for homogeneous 
$q_2,r_3\in K[u_1,u_2]$.
Comparing the coefficients of $u_1^{a+1}$
and using that $\Phi_n=u_1^{\ell(n-1)}+u_2^{\ell}\Phi_{n-1}$, we obtain 
\[
a+1\geq\text{deg}(\Phi_n)=\ell(n-1),
\]
\[
r_3(u_1,u_2)=\xi u_1^{a-\ell (n-1)+1}+u_2s_3(u_1,u_2),
\quad s_3\in K[u_1,u_2],
\]
\[
q_2(u_1,u_2)=(u_1^{\ell(n-1)}+u_2^{\ell}\Phi_{n-1})s_3
+\xi u_1^{a-\ell(n-1)+1}u_2^{\ell-1}\Phi_{n-1},
\]
for any $\xi\in K$ and arbitrary homogeneous polynomial 
$s_3(u_1,u_2)\in K[u_1,u_2]$ of degree $a-\ell(n-1)$.
\end{example}

It is naturally to ask whether the structure of $K\langle X\rangle$ 
considered as a bimodule of $K[f]$, 
when $f\in K\langle X\rangle$ is an arbitrary polynomial,
is similar to that in Theorem \ref{relations as bimodule}.
The following example shows that in this case some phenomena appear
similar to those in the Buchberger algorithm for the Gr\"obner basis of an ideal.
We do not expect a nice bimodule structure of $K\langle X\rangle$ in the general case.

\begin{example}\label{bimodule structure for any polynomial}
Let us order the monomials of $\langle x,y\rangle$ first by degree and then
lexicographically, assuming that $x>y$. Let
\[
f=xyx+yxx, \quad u=xyx,\quad t_1=xy,\quad t_2=yx.
\]
The leading monomial of $f$ is $u$ and we have $t_1u=ut_2$.
Direct computation gives that
\[
ft_1-ft_2+t_2f=(xy+yx)yxx
\]
belongs to the $K[f]$-bimodule generated by $t_1$ and $t_2$
but its leading monomial $xyyxx$ neither starts or ends with $u$.
\end{example}

\section{The counterexample to Conjecture \ref{degree of commutator}}

The following result presents a counterexample 
to Conjecture \ref{degree of commutator}.

\begin{theorem}\label{the counterexample to first conjecture}
Let $X=\{x,y\}$, $k\geq 2$, and let 
\[
u=(xy)^kx,\quad v=xy,\quad w=yx,
\]
\[
f=u^3+r,\quad r=uv+uw+wu,
\]
\[
g=u^2+s,\quad s=v+w.
\]
Then $f$ and $g$ are algebraically independent polynomials 
 which generate their centralizers $C(f)$ and $C(g)$ in $K\langle x,y\rangle$.
The homogeneous components of maximal degree of $f$ and $g$ are algebraically dependent 
and neither of the degrees of $f$ and $g$ divides the other. Then 
\[
\text{\rm deg}([f,g])<\text{\rm deg}(g)<\text{\rm deg}(f).
\]
The quotient
\[
\frac{\text{deg}([f,g])}{\text{deg}(g)}
=\frac{1}{2}+\frac{2}{2k+1}
\]
is bigger that $1/2$ but can be made as close to $1/2$ as we want by increasing $k$.
\end{theorem}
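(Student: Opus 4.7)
The key observation is that $u = (xy)^k x$ together with $(t_1, t_2) = (v, w) = (xy, yx)$ realizes the third type of bimodule in Theorem \ref{relations as bimodule}: indeed
\[
vu = (xy)(xy)^k x = (xy)^k x (yx) = uw.
\]
The particular $r$ and $s$ are the $t_1, t_2$-component of the solution to the commutator equation $[u^3, s] = [u^2, r]$ found in Example \ref{solution of commutator equation} with $\ell = 1$, $m = 3$, $n = 2$ (corresponding to $q_1 = q_2 = 1$, $p_1 = u_1$, $p_2 = u_1 + u_2$). Accordingly, my first step is to verify by direct expansion that $[u^3, s] + [r, u^2] = 0$: the trivial pairwise cancellations leave only $-vu^3 + uvu^2 + uwu^2 - u^2wu$, which vanishes via the consequences $vu^3 = uwu^2$ and $u^2wu = uvu^2$ of $vu = uw$. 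It follows that
\[
[f, g] = [u^3 + r, u^2 + s] = [r, s].
\]

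Next I would compute $[r, s]$ directly. Using the same identity $vu = uw$ (so $vuv = uwv$ and $vuw = uw^2$), almost all terms in $rs - sr$ cancel and one is left with
\[
[r, s] = uv^2 + uvw - vwu - w^2 u.
\]
Each summand is a single word of length $2k+5$ in $\langle x, y\rangle$, and a brief inspection of initial segments shows these four monomials are pairwise distinct ($w^2 u$ is the only one starting with $y$; $vwu$ has third letter $y$ versus $x$ for the two starting with $u$; and $uv^2$, $uvw$ agree up to position $2k+3$ and then differ). Hence $[f, g] \ne 0$ and $\deg([f, g]) = 2k+5$. Since $\deg(g) = 2(2k+1) = 4k+2$ and $\deg(f) = 3(2k+1) = 6k+3$, the strict inequality $2k+5 < 4k+2 < 6k+3$ holds for $k \ge 2$, and
\[
\frac{\deg([f,g])}{\deg(g)} = \frac{2k+5}{4k+2} = \frac{1}{2} + \frac{2}{2k+1},
\]
which is strictly greater than $1/2$ and tends to $1/2$ as $k \to \infty$.

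For the remaining algebraic properties: since $[f, g] \ne 0$, Cohn's theorem shows that $f$ and $g$ are algebraically independent. The leading components are $u^3$ and $u^2$, both powers of $u$ and hence algebraically dependent; and $\deg(f) = 3(2k+1)$, $\deg(g) = 2(2k+1)$ do not divide each other. To show that $f$ generates $C(f)$, I invoke Bergman's theorem to write $C(f) = K[h]$ with $f = p(h)$ of degree $m$ in $h$. Equating top homogeneous components in $[h, f] = 0$ gives $[\bar h, u^3] = 0$, so $\bar h \in C(u^3) = K[u]$ (using that $u$ is not a proper power), forcing $\bar h = \gamma u^j$ with $jm = 3$. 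In the nontrivial case $(j, m) = (1, 3)$, write $h = \gamma u + h_1$ with $1 \le d := \deg h_1 < 2k+1$; expanding $[h, f] = \gamma[u, r] + [h_1, u^3] + [h_1, r] = 0$, the only contribution of degree $d + 6k+3$ comes from $[h_1, u^3]$, whose top component $[\bar{h_1}, u^3]$ must therefore vanish. Hence $\bar{h_1} \in K[u]$, so $\bar{h_1} = \mu u^e$ with $d = e(2k+1)$, incompatible with $1 \le d < 2k+1$. Thus $h_1$ is scalar, $K[h] = K[u]$, and $f \in K[u]$ contradicts $r \ne 0$. So $m = 1$ and $C(f) = K[f]$; the argument for $C(g) = K[g]$ is identical with $(j, m) = (1, 2)$ in place of $(1, 3)$.

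The main obstacle is the centralizer analysis above: the explicit commutator computation is almost automatic from $vu = uw$, but ruling out a smaller-degree generator of $C(f)$ (or $C(g)$) requires this two-stage graded argument---first fixing $\bar h$ via $C(u^j) = K[u]$, then repeating the same idea on the secondary component $h_1$ through the top-degree part of $[h, f]$.
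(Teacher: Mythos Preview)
Your argument is correct and, for the commutator computation, follows exactly the paper's route: both use the single relation $vu=uw$ to verify $[u^3,s]+[r,u^2]=0$ and then simplify $[r,s]$ down to the four surviving monomials $uv^2+uvw-vwu-w^2u$, from which the degree count and the ratio $\tfrac{1}{2}+\tfrac{2}{2k+1}$ follow immediately.

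The one genuine difference is that you actually \emph{prove} that $f$ and $g$ generate their centralizers, whereas the paper's proof asserts this in the theorem statement but never verifies it. Your two-stage graded argument via Bergman's theorem (first pinning down $\bar h\in K[u]$ from $[\bar h,u^3]=0$, then repeating on the residual $h_1$ to force it to be scalar, and finally noting $r\notin K[u]$) is a clean and complete way to close this gap. So your write-up is, in this respect, more complete than the paper's own proof.
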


\begin{proof}
Since $[f,g]\not=0$, 
we derive that $f$ and $g$ are algebraically independent.
The homogeneous components of maximal degree of
$f$ and $g$ are $u^3$ and $u^2$, respectively, and are algebraically dependent.
Their degrees $6k+3$ and $4k+2$ do not divide each other. Direct computations give
\[
[f,g]=[u^3,s]+[r,u^2]+[r,s].
\]
Since, as in Theorem \ref{relations as bimodule} (iii) 
\[
vu=(xy)(xy)^kx=(xyx)^k(yx)=uw, 
\]
we obtain that
\[
[u^3,s]=[u^3,v+w]=u^3(v+w)-(v+w)u^3
\]
\[
=u^3v+u^3w-vu^3-wu^3
=u^3v+u^3w-uwu^2-wu^3,
\]
\[
[r,u^2]=(uv+uw+wu)u^2-u^2(uv+uw+wu)
\]
\[
=uvu^2+uwu^2+wu^3-u^3v-u^3w-u^2wu
\]
\[
=u^2wu+uwu^2+wu^3-u^3v-u^3w-u^2wu=uwu^2+wu^3-u^3v-u^3w,
\]
\[
[u^3,s]+[r,u^2]=0.
\]
Hence,
\[
[f,g]=[r,s]=uvv+uwv+wuv+uvw+uww+wuw
\]
\[
-vuv-vuw-vwu-wuv-wuw-wwu
\]
\[
=uvv+uwv+uvw+uww
-vuv-vuw-vwu-wwu
\]
\[
=uvv+uwv+uvw+uww
-uwv-uww-vwu-wwu
\]
\[
=uvv+uvw-vwu-wwu
\]
\[
=(xy)^kx(xy)(xy)+(xy)^kx(xy)(yx)-(xy)(yx)(xy)^kx-(yx)(yx)(xy)^kx\not=0,
\]
\[\text{deg}([f,g])=\text{deg}(r)+\text{deg}(s)
=4+\text{deg}(u)=2k+5<4k+2=\text{deg}(g).
\]
Clearly, for $k$ sufficiently large, we can make the quotion of the degrees of
$[f,g]$ and $g$ as close to 1/2 as we want.
\end{proof}

The counterexample of Theorem \ref{the counterexample to first conjecture}
has been found applying Theorem \ref{relations as bimodule}
and Example \ref{solution of commutator equation}. We start with
\[
u=(xy)^kx,\quad v=t_1=xy,\quad w=t_2=yx.
\]
Hence $vu=uw$. Working in the $K[u]$-subbimodule of $K\langle x,y\rangle$
generated by $t_1$ and $t_2$, we 
search for $r$ and $s$ such that $[u^m,s]=[u^n,r]$. Then for
\[
f=u^m+r,\quad g=u^n+s, 
\]
where $m>n>1$ and $n$ does not divide $m$, we obtain
\[
[f,g]=[u^m,s]+[r,u^n]+[r,s]=[r,s].
\]
By Example \ref{solution of commutator equation} 
the equation $[u^m,s]=[u^n,r]$ has a partial solution
\[
r=ut_1+ut_2+t_2u,\quad s=t_1+t_2
\]
obtained for
\[
m=3,n=2,\quad p_1=u_1,p_2=u_1+u_2,\quad q_1=q_2=1.
\]
But this approach does not allow to find a solution with  
$\text{deg}([f,g])\leq\text{deg}(g)/2$.

Trying to decrease the degree of $[f,g]$ further, as in the example
of Makar-Limanov, we may add new homogeneous summands to $f$, e.g.
\[
f=u^m+r+r_1,\quad \text{deg}(r_1)<\text{deg}(r),
\] 
such that $[r,s]+[r_1,u^n]=0$. But we face computational (and maybe principal)
difficulties: The monomials of $[r,s]$ are of the form
$u^at_iu^bt_ju^c$, $t_i,t_j=v,w$. Using the relation $vu=uw$, we may assume
that $b=0$ if $t_i=v$ or $t_j=w$. Hence 
\[
[r,s]=\sum h_bwu^bv+h_{11}vv+h_{12}vw+h_{22}ww,\quad h_b,h_{ij}\in K[u_1,u_2].
\]
Since the monomials $wu^bv,vv,vw,ww$ are neither beginnings nor tails of $u$,
we have to work in a free $K[u]$-bimodule and do not know how to
find $r,s,r_1$ of sufficiently small degree such that $[f,g]=[r_1,s]$ and
$\text{deg}([f,g])\leq\text{deg}(g)/2$. The computations become even worst if
we try to add one more component to $g$:
\[
f=u^m+r+r_1,\quad \text{deg}(r_1)<\text{deg}(r),\quad
g=u^n+s+s_1,\quad \text{deg}(s_1)<\text{deg}(s).
\]

\section{Working in the Malcev -- Neumann algebra}

Let $F(X)$ be the free group generated by $X$.
We define the total degree of $u=x_{i_1}^{\pm 1}\cdots x_{i_k}^{\pm 1}\in F(X)$
in the usual way, assuming that $\text{deg}(x_i^{\pm 1})=\pm 1$.
By the theorem of Neumann -- Shimbireva \cite{N1, S}, the group $F(X)$ can be
ordered linearly in many ways. In particular, see Theorem 2.3 in \cite{N1},
if $H$ is a linearly ordered factor group of $F(X)$, then the order of $H$ can be
lifted to a linear order of $F(X)$. Defining a partial order on the free abelian group
generated by $X$ by total degree and then refining it in an arbitrary way, e.g.
lexicographically, we obtain a linear order on $F(X)$ such that 
if $\text{deg}(u_1)<\text{deg}(u_2)$, then $u_1<u_2$. 
Since $\langle X\rangle\subset F(X)$,
we assume that the elements of $\langle X\rangle$ 
are linearly ordered in the same way. If
\[
g=g(X)=\sum_{i=1}^p\alpha_iu_i,\quad 0\not=\alpha_i\in K, 
\quad u_i\in \langle X\rangle,
\quad u_1>u_2>\cdots>u_p,
\]
we denote by $\nu(g)$ the leading monomial $\alpha_1u_1$ of $g$. 
We denote by ${\mathcal A}(X)$ the Malcev -- Neumann algebra of formal power series
used by Malcev and Neumann \cite{M, N2} to show that the group algebra of
an ordered group can be embedded into a division ring. The algebra ${\mathcal A}(X)$
consists of all formal sums 
\[
\tau=\sum_{u\in \Delta}\alpha_uu,\quad \alpha_u\in K,
\]  
where $\Delta$ is a well ordered subset of $F(X)$. 
(For commutative objects this construction was used by Hahn \cite{H}.)
We shall use ${\mathcal A}(X)$
in the spirit of Makar-Limanov and Yu \cite{MLY} and shall assume that $\Delta$
is well ordered relative to the opposite ordering, i.e., any nonempty subset of $\Delta$
has a largest element. Again, if $0\not=\tau\in {\mathcal A}(X)$, we denote by 
$\nu(\tau)$ its leading monomial $\alpha_1u_1$, $\alpha_1\in K$, $u_1\in F(X)$. 
The following Lemma on radicals
of Bergman \cite{B2, B3} plays a crucial role in \cite{MLY}.

\begin{lemma}\label{lemma on radicals}
If $0\not=\tau\in{\mathcal A}(X)$ and $\nu(\tau)=(\beta u)^n$, $\beta\in K$,
$u\in F(X)$, is an $n$-th root, then there exists a $\rho\in {\mathcal A}(X)$ such that
$\tau=\rho^n$.
\end{lemma}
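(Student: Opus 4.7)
My plan is to construct $\rho$ by a transfinite Newton-style approximation, starting from the guess $\rho_0=\beta u$ and iteratively killing the leading monomial of the error $\rho^n-\tau$, then argue via Malcev--Neumann well-orderedness that the limit lies in $\mathcal{A}(X)$.

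First I would normalize: scaling $\tau$ by $\beta^{-n}$ reduces to the case $\nu(\tau)=u^n$. Writing $\tau=u^n+\tau_0$ with every monomial of $\tau_0$ strictly below $u^n$, I seek $\delta\in\mathcal{A}(X)$ with $\nu(\delta)<u$ so that $\rho=u+\delta$ satisfies $\rho^n=\tau$. Expanding gives
\[
L(\delta)+Q(\delta)=\tau_0,\qquad L(\delta):=\sum_{i=0}^{n-1}u^{i}\,\delta\,u^{n-1-i},
\]
where $L$ is the $K$-linear differential of the $n$-th power map at $u$ and $Q$ collects all terms in which $\delta$ appears at least twice.

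The key pointwise observation is that for each group monomial $m<u$, $L(m)$ is a sum of $n$ elements $u^{i}m u^{n-1-i}$ of $F(X)$, which are either pairwise distinct (leading coefficient $1$) or---when $m$ commutes with $u$, equivalently $m\in\langle u\rangle$, using that $u$ is not a proper power and centralizers in $F(X)$ are cyclic---all equal (leading coefficient $n$). Either way the leading coefficient is a nonzero integer, hence invertible in $K$. I would verify that the map sending $m$ to the leading monomial of $L(m)$ bijects $\{m\in F(X):m<u\}$ onto $\{w\in F(X):w<u^n\}$ in an order-preserving way, since on each regime of the ordering it restricts to multiplication by a fixed power of $u$. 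Consequently, for every prescribed residual leading term $\alpha w$ with $w<u^n$, there is a unique scalar multiple of a unique monomial $m<u$ whose addition to $\delta$ cancels that term. Running transfinite induction along the reverse well-ordered sequence of residual leading monomials, I would append at each stage the unique correction that kills the current leading term; each stage strictly decreases the leading monomial of the residual.

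The main obstacle is verifying that the resulting $\rho$ has reverse well-ordered support, so that $\rho\in\mathcal{A}(X)$ and the infinite sum $L(\delta)+Q(\delta)$ truly converges to $\tau_0$. The danger is that the nonlinear contribution $Q(\delta)$ injects further monomials at each stage which could in principle generate a strictly increasing infinite chain in the support. I would close this by invoking Neumann's theorem that the submonoid of an ordered group generated by a reverse well-ordered subset is again reverse well-ordered, together with the observation that the support of $\delta$ is contained in $u^{-(n-1)}\cdot T^{*}$, where $T^{*}$ is the submonoid generated by $u^{\pm 1}$ and the support of $\tau_0$---a fixed reverse well-ordered set. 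With this bookkeeping $\rho$ belongs to $\mathcal{A}(X)$ and satisfies $\rho^n=\tau$ by construction.
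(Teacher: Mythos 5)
Your proposal cannot be matched against an argument in the text, because the paper does not prove this lemma at all: it is Bergman's Lemma on radicals, quoted with a reference to his papers on conjugates and $n$th roots in Hahn--Laurent group rings, so your attempt has to stand on its own. Its first half does stand. After normalizing $\beta=1$, your analysis of the linearization $L(\delta)=\sum_{i=0}^{n-1}u^{i}\delta u^{n-1-i}$ is correct: for a monomial $m<u$, comparing consecutive terms reduces (by bi-invariance of the order) to comparing $um$ with $mu$, so $\nu(L(m))$ is $u^{n-1}m$ when $um\geq mu$ and $mu^{n-1}$ when $um<mu$, the leading coefficient is $1$ or $n$ (cyclicity of centralizers in $F(X)$ excludes partial coincidences among the $n$ terms), and $m\mapsto\nu(L(m))$ is indeed an order bijection of $\{m<u\}$ onto $\{w<u^{n}\}$. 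So each leading term of the residual can be killed by a unique scalar multiple of a unique monomial, exactly as you say.

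The genuine gap is at the point you yourself flag as the main obstacle, and your fix fails. Neumann's theorem that a reverse well-ordered set generates a reverse well-ordered submonoid requires all generators to be strictly smaller than $1$; your $T^{*}$ is generated by $u^{\pm1}$ (and by $\mathrm{supp}(\tau_0)$, which in general also contains elements $>1$), so it contains the strictly increasing chain $u,u^{2},u^{3},\dots$ and is not reverse well-ordered. The inclusion $\mathrm{supp}(\delta)\subseteq u^{-(n-1)}T^{*}$ is then essentially vacuous (indeed $u^{-(n-1)}T^{*}\subseteq T^{*}$) and gives no control whatsoever: it does not guarantee that the accumulated corrections at a limit stage form an element of $\mathcal{A}(X)$ (needed even to define the residual there), nor that the final $\delta$ has reverse well-ordered support, nor the local finiteness needed to conclude $\rho^{n}=\tau$. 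Degree bounds do not rescue it either, since within a fixed degree the corrections involve shifts of the form $u^{-a}wu^{b}$, and iterated conjugation by $u$ can produce infinite increasing chains in a single degree. This support bookkeeping is precisely the real content of Bergman's lemma, so the proof is not complete without it; a workable route --- in the spirit of Bergman's paper, whose very title couples conjugates with $n$th roots --- is first to conjugate $\tau$ by a series with leading term $1$ so that its support lies in the centralizer of $u$ in $F(X)$, which is cyclic, and then extract the root inside the resulting commutative Hahn series field by the ordinary binomial/Hensel argument before conjugating back. Some device of that kind, or a substantially finer closure argument for the supports, is needed to make your Newton-style iteration converge in $\mathcal{A}(X)$.
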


Now we shall show that the polynomial $g$ from the conuterexample to
Conjecture \ref{degree of commutator} serves as a counterexample also to 
Conjecture \ref{conjecture on radicals}.

\begin{theorem}\label{the counterexample to second conjecture}
Let $X=\{x,y\}$, $k\geq 2$, and let 
\[
u=(xy)^kx,\quad v=xy,\quad w=yx,
\]
\[
g=u^2+s,\quad s=v+w.
\]
Then $g$ generates its centralizer $C(g)$ in $K\langle x,y\rangle$ and
its homogeneous component of maximal degree is a square in $K\langle x,y\rangle$.
If $\rho\in {\mathcal A}(x,y)$ is such that $g=\rho^2$, then
$\rho^3$ has no  monomial of positive degree containing 
a negative power of $x$ or $y$.
\end{theorem}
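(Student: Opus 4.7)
The plan is to verify three claims: that the leading homogeneous component of $g$ is a square in $K\langle x,y\rangle$ (immediate, since it equals $u^2=u\cdot u$); that $g$ generates its centralizer $C(g)$; and that for any $\rho\in\mathcal{A}(x,y)$ with $\rho^2=g$, the positive-degree part of $\rho^3$ consists only of monomials in $\langle x,y\rangle$. For the centralizer claim, Bergman's theorem gives $C(g)=K[h]$ for some $h$, so $g=q(h)$ for a univariate polynomial $q$. Matching leading monomials gives $(u')^{\deg q}=u^2$ up to scalar, where $u'$ is the leading monomial of $h$; since $u=(xy)^k x$ is a primitive word in $\langle x,y\rangle$ (it has period $2$ but odd length $2k+1$, so cannot be a proper power), the Lyndon--Sch\"utzenberger theorem leaves only $\deg q\in\{1,2\}$, with $u'=u$ in the second case. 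Writing $h=\pm u+h_1$ with $\deg h_1<2k+1$ and expanding $q(h)=u^2+s$, one uses that $uh_1+h_1u$ has no internal cancellation for nonconstant $h_1$ of degree less than $\deg u$ (the two leading monomials $ut$ and $tu$ being distinct by primitivity of $u$), which forces $h_1$ to be at most a scalar and then produces a contradiction with $s=v+w$, ruling out $\deg q=2$. Hence $\deg q=1$ and $K[h]=K[g]$.

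For the main assertion, Lemma~\ref{lemma on radicals} produces $\rho\in\mathcal{A}(x,y)$ with $\rho^2=g$, and uniqueness of roots in the free group $F(X)$ forces the leading monomial of any such $\rho$ to be $\pm u$. Write $\rho=\epsilon u+\eta$ with $\epsilon\in\{\pm 1\}$ and every monomial of $\eta$ of degree less than $2k+1$. The equation $\rho^2=g$ becomes
\[
\epsilon(u\eta+\eta u)+\eta^2=s=v+w,
\]
which I solve by descending degree. The top component $\eta_0$ of $\eta$ satisfies $\epsilon(u\eta_0+\eta_0 u)=v+w$; using the bimodule identity $vu=uw$ from Theorem~\ref{relations as bimodule}(iii), one verifies that $\eta_0=\epsilon u^{-1}v=\epsilon w u^{-1}$ (the two expressions coincide as reduced words in $F(X)$), a single monomial of degree $-(2k-1)$. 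The remainder $\eta_1:=\eta-\eta_0$ then satisfies $\epsilon(u\eta_1+\eta_1u)=-\eta^2$, whose right-hand side has top degree $-(4k-2)$; hence every monomial of $\eta_1$ has degree at most $-(6k-1)$.

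Finally, I expand $\rho^3=\rho\cdot g=(\epsilon u+\eta_0+\eta_1)(u^2+s)$ into six summands and classify by degree. The only contributions of non-negative degree are
\[
\epsilon u\cdot u^2=\epsilon u^3,\qquad \epsilon u\cdot s=\epsilon(uv+uw),\qquad \eta_0\cdot u^2=\epsilon u^{-1}(vu)u=\epsilon u^{-1}(uw)u=\epsilon wu,
\]
where the simplification $u^{-1}u=1$ in the last step is exactly what removes the negative letters. The remaining three summands $\eta_0\cdot s$, $\eta_1\cdot u^2$ and $\eta_1\cdot s$ consist entirely of monomials of degrees $-(2k-3)$, at most $-(2k-3)$, and at most $-(6k-3)$ respectively, all strictly negative for $k\geq 2$. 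Therefore the positive-degree part of $\rho^3$ equals $\epsilon(u^3+uv+uw+wu)$, which lies in $K\langle x,y\rangle$ and involves no negative powers of $x$ or $y$, as required.

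The main obstacle is identifying the precise form of $\eta_0$: a priori many elements of $F(X)$ have degree $-(2k-1)$, and it is the bimodule relation $vu=uw$ of Theorem~\ref{relations as bimodule}(iii) that both selects the admissible solution $u^{-1}v=wu^{-1}$ and then forces $\eta_0\cdot u^2$ to collapse back into a positive word in $\langle x,y\rangle$ rather than one carrying negative letters; this is the same combinatorial mechanism that powers the counterexample of Theorem~\ref{the counterexample to first conjecture}.
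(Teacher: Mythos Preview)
Your proof is correct and follows the same strategy as the paper's: identify the leading correction $\eta_0=wu^{-1}$ via the relation $vu=uw$, bound the degrees of all lower terms, and check that only $\eta_0$ contributes to the positive-degree part of $\rho^3$. Your computation is in fact slightly cleaner than the paper's---you exploit $\rho^3=\rho\cdot g$ instead of expanding the cube directly, and you also supply an argument (via Bergman's theorem and primitivity of $u$) for the centralizer claim $C(g)=K[g]$, which the paper states but does not prove.
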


\begin{proof}
We may assume that
\[
\rho=g^{1/2}=u+a_1+a_2+\cdots,
\]
where $a_i$ are homogeneous polynomials such that 
\[
2k+1=\text{deg}(u)>\text{deg}(a_1)>\text{deg}(a_2)>\cdots.
\]
These polynomials are determined step-by-step in a unique way from the condition
\[
g=u^2+s=\rho^2=u^2+(ua_1+a_1u)+(a_1^2+ua_2+a_2u)+\cdots.
\]
Comparing the homogeneous components of $g$ and $\rho^2$ and their degrees, we obtain
\[
ua_1+a_1u=s,\quad \text{deg}(a_1)=\text{deg}(s)-\text{deg}(u)=1-2k,
\]
\[
a_1^2+ua_2+a_2u=0,\quad \text{deg}(a_2)=2\text{deg}(a_1)-\text{deg}(u)=1-6k,
\]
\[
\text{deg}(a_i)=\text{deg}(a_1)+\text{deg}(a_{i-1})-\text{deg}(u)=1-2(2i-1)k.
\]
As in the proof of Theorem \ref{the counterexample to first conjecture},
we have $vu=uw$. Hence $wu^{-1}$ satisfies
\[
u(wu^{-1})+(wu^{-1})u=(uw)u^{-1}+w=(vu)u^{-1}+w=v+w=s,
\]
we conclude that $a_1=wu^{-1}$.
Now
\[
\rho^3=u^3+\sum(u^2a_i+ua_iu+a_iu^2)+\sum(ua_ia_j+a_iua_j+a_ia_ju)+\sum a_ia_ja_l,
\]
\[
\text{deg}(u^2a_1)=2(2k+1)+(1-2k)=2k+3,
\]
\[
\text{deg}(u^2a_i)\leq \text{deg}(u^2a_2)=2(2k+1)+(1-6k)=3-2k<0,\quad i\geq 2,
\]
\[
\text{deg}(ua_ia_j)\leq \text{deg}(ua_1^2)=\text{deg}(u^2a_2)<0,\quad i,j\geq 1,
\]
\[
\text{deg}(a_ia_ja_k)<0,
\]
and we obtain that the component of positive degree of $\rho^3$ is
\[
u^3+(u^2a_i+ua_iu+a_iu^2)=u^3+(u^2(wu^{-1})+u(wu^{-1})u+(wu^{-1})u^2)
\]
\[
=u^3+(u(uw)u^{-1}+(uw)(u^{-1}u)+w(u^{-1}u)u)
\]
\[
=u^3+(uv+uw+wu)=u^3+r=f,
\]
where $f=u^3+r$ is the other polynomial from 
Theorem \ref{the counterexample to first conjecture}.
Hence $\rho^3$ does not contain monomials of positive degree 
with negative powers of variables $x$ and $y$. 
\end{proof}

\section {Acknowlegement}
The authors are grateful to Leonid Makar-Limanov
for his helpful comments and suggestions, and especially for his kind permission
to include Example 1.1.

\end{document}